\newtheorem{theorem}{Theorem}
\newtheorem{lemma}[theorem]{Lemma}
\newtheorem{corollary}[theorem]{Corollary}
\theoremstyle{definition}
\newtheorem{example}[theorem]{Example}
\newtheorem{remark}[theorem]{Remark}
\def\NN{\mathbb{N}}
\def\CC{\mathbb{C}}
\def\ZZ{\mathbb{Z}}
\def\id{I}
\def\dd{\mathrm{d}}
\def\RR{\mathbb{R}}
\def\cc{\mathrm{c}}
\def\co{\mathrm{c}_0}
\def\BUC{\mathrm{BUC}}
\def\ran{\mathop{\mathrm{ran}}}
\def\rg{\mathop{\mathrm{rg}}}
\def\one{\mathbf{1}}
\def\LLL{\mathscr{L}}
\def\veps{\varepsilon}
\def\aap{{\text{\rm aap}}}
\def\st{{\text{\rm s}}}
\def\rv{{\text{\rm r}}}
\begin{document}
\title[A Bohl--Bohr--Kadets type theorem]{A Bohl--Bohr--Kadets type theorem characterizing Banach spaces not containing $c_0$}
\author{B\'alint Farkas}
\address{Bergische Universit\"at Wuppertal\newline Faculty of Mathematics and Natural Science\newline Gaussstrasse 20, D-42119 Wupertal, Germany}
\email{farkas@uni-wuppertal.de}
\begin{abstract}
We prove that a separable Banach space $E$ does not contain a copy of the space $\co$ of null-sequences if and only if for every doubly power-bounded operator $T$ on $E$ and for every vector $x\in E$ the relative compactness of the sets $\{T^{n+m}x-T^nx: n\in \NN\}$ (for some/all $m\in\NN$, $m\geq 1$) and $\{T^nx:n\in \NN\}$ are equivalent. With the help of the Jacobs--de Leeuw--Glicksberg decomposition of strongly compact semigroups the case of (not necessarily invertible) power-bounded operators is also handled.
\end{abstract}
\thanks{This paper was supported by the J\'anos Bolyai
Research Fellowship of the Hungarian Academy of Sciences and
by the Hungarian Research Fund (OTKA-100461).}
\keywords{Power-bounded linear operator, (asymptotically) almost periodic vectors and differences, the Banach space of null-sequences, Kadets' theorem}
\subjclass[2010]{47A99, 46B04, 43A60}
 \maketitle

This note concerns the following problem: Given a Banach space $E$, a bounded linear operator $T\in \LLL(E)$ and a vector $x\in E$, we would like to
conclude the relative compactness of the orbit
\begin{align*}
&\bigl\{T^nx:\:n\in\NN\bigr\}\subseteq E
\intertext{from the relative compactness of the consecutive differences of the iterates}
&\bigl\{T^{n+1}x-T^nx:\:n\in\NN\bigr\}\subseteq E.
\end{align*}
This problem is a discrete, ``linear operator analogue'' of the classical Bohl--Bohr theorem about the integration of almost periodic functions. Before going to the results let us explain this connection.

\medskip\noindent Given a (Bohr) almost periodic function $f:\RR\to \CC$ with its integral $F(t)=\int_0^tf(s)\dd s$ bounded, then $F$ is almost periodic itself. This result was extended to Banach space valued almost periodic functions $f:\RR\to E$ by M.~I. Kadets \cite{KadecMI}, provided that $E$ does not contain an isomorphic copy of the Banach space $\co$ of null-sequences. Actually, the validity of this integration result for \emph{every} almost periodic function $f:\RR\to E$ characterizes the absence of $\co$ in the Banach space $E$.

\medskip\noindent The generalization of Kadets' result---which explains the connection to our problem---was studied by Basit for functions $f:G\to E$ defined on a group $G$ and taking values in the Banach space $E$ (for simplicity suppose now $G$ to be Abelian).  In \cite{Bolis71} Basit proved that if  $F:G\to E$ is a bounded function with almost periodic difference functions
$$
F(\cdot +g)-F(\cdot)\quad\mbox{for all $g\in G$},
$$
and $E$ does not contain $\co$, then $F$ is almost periodic. The relation to Kadets' result is the following: If $f:\RR\to E$ is almost periodic, so is $F_\veps(t):=\int_{t}^{t+\veps} f(s)\dd s$ for every $\veps>0$. So Kadets' theorem tells that if $F(t)=\int_0^t f(s)\dd s$ is bounded and $E$ does not contain $\co$, then the almost periodicity of
$$
F(\cdot+\veps)-F(\cdot)\quad\mbox{for all $\veps>0$}
$$
implies that of $F$.

\medskip\noindent Now returning to our problem, suppose $T\in \LLL(E)$ is \emph{doubly power-bounded} (i.e., $T,T^{-1}\in \LLL(E)$ are both power-bounded). Then by applying Basit's result to the function   $F:\ZZ\to E$, $F(n):=T^nx$, we obtain that $\{T^nx:n\in \ZZ\}$ is relatively compact in $E$ if
$$
\bigl\{T^{n+m}x-T^nx:n\in \ZZ\bigr\}\quad\mbox{is relatively compact for all $m\in \ZZ$},
$$
for which it suffices that
$$
\bigl\{T^{n+1}x-T^nx:n\in \ZZ\bigr\}\quad\mbox{is relatively compact}.
$$
Let us record this latter fact in the next lemma.
\begin{lemma}\label{lem:A1Am}
Let $E$ be a Banach space and $T\in\LLL(E)$ be a power-bounded operator.
 If for some $x\in E$ the set
$$
 \bigl\{T^{n+1}x-T^nx:n\in \NN\bigr\}
 $$
 is relatively compact, then so is the set
  $$
 \bigl\{T^{n+m}x-T^n x:n\in \NN\bigr\}
 $$
for all $m\in \NN$.
 \end{lemma}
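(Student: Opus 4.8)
The plan is to exploit a telescoping identity together with the elementary fact that a bounded linear operator maps relatively compact sets to relatively compact sets. First I would fix $m\in\NN$ and record, for every $n\in\NN$, the identity
\[
T^{n+m}x-T^nx=\sum_{j=0}^{m-1}\bigl(T^{n+j+1}x-T^{n+j}x\bigr)=\sum_{j=0}^{m-1}T^j\bigl(T^{n+1}x-T^nx\bigr),
\]
where the first equality is just a telescoping sum and the second holds because each power $T^j$ commutes with $T$. Introducing the bounded operator $S_m:=\sum_{j=0}^{m-1}T^j\in\LLL(E)$, this reads $T^{n+m}x-T^nx=S_m\bigl(T^{n+1}x-T^nx\bigr)$ for all $n$.

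From here the conclusion is immediate: the set $\bigl\{T^{n+m}x-T^nx:n\in\NN\bigr\}$ equals $S_m\bigl(\bigl\{T^{n+1}x-T^nx:n\in\NN\bigr\}\bigr)$, i.e.\ it is the image under the continuous map $S_m$ of a set that is relatively compact by hypothesis, hence it is relatively compact itself. No induction on $m$ is actually required; a single application of the displayed identity does the job, although one could equally phrase the same computation step by step.

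The argument needs nothing beyond boundedness of $T$ — power-boundedness is assumed in the statement only to fit the surrounding discussion and plays no role here. Accordingly there is no genuine obstacle; the only point deserving a second of care is the bookkeeping of indices in the telescoping sum, namely verifying that $\sum_{j=0}^{m-1}\bigl(T^{n+j+1}x-T^{n+j}x\bigr)$ does collapse to $T^{n+m}x-T^nx$, after which the proof is a one-liner.
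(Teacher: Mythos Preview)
Your proof is correct and follows essentially the same telescoping argument as the paper: both express $T^{n+m}x-T^nx$ as $\sum_{j=0}^{m-1}T^j(T^{n+1}x-T^nx)$ and deduce relative compactness from continuity of the powers $T^j$. Your packaging via the single operator $S_m=\sum_{j=0}^{m-1}T^j$ is a touch cleaner than the paper's inclusion $D_m\subseteq D_1+TD_1+\cdots+T^{m-1}D_1$, but the idea is identical.
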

 \begin{proof}
Denote by $D_1$ the first set and by $D_m$ the second one. We may suppose $m\geq 2$.  By continuity of $T$ the sets $TD_1$, $T^2D_1$, $T^{m-1}D_1$ are all relatively compact. Since
$$
T^{m+n}x-T^nx=
T^{m-1}(T^nx-x)+T^{m-2}(T^nx-x)+\cdots +T(T^nx-x)+(T^n x-x),
$$
we obtain $D_m\subseteq TD_1+T^2D_1+\cdots +T^{m-1}D_1$ implying the relative compactness of $D_m$.
 \end{proof}

So our problem can be answered satisfactorily for doubly power-bounded operators on Banach spaces not containing $\co$. The situation is different if $T$ is non-invertible, or invertible but with not power-bounded inverse. To enlighten what may be true in such a situation, let us recall a result of Ruess and Summers, who considered the generalization of the Bohl--Bohr--Kadets result for functions $f:\RR_+\to E$,  see \cite[Thm.~2.2.2]{RuSu1} or \cite[Thm.~4.3]{RuSu2}.

\medskip\noindent  Given an \emph{asymptotically almost periodic} function $f:\RR_+\to E$, one can find an \emph{almost periodic} one $f_\rv:\RR\to E$ and another function $f_\st:\RR_+\to E$ vanishing at infinity such that $f=f_\st+f_\rv$. Ruess and Summers proved the following. Suppose  $f$ is asymptotically almost periodic with
$$
F(t):=\int_0^t f(s)\dd s\quad\mbox{bounded},
$$
and the improper Riemann integral of $f_\st$ exists in $E$. If $E$ does not contain $\co$ then $F$ is asymptotically almost periodic. For details and discussion we refer to \cite[Sec.~2.2]{RuSu1}. As we see, a Jacobs--de Leeuw--Glicksberg type decomposition plays an essential role here.

\medskip\noindent Our main result, Corollary \ref{cormain}, provides the solution to the very first problem concerning power-bounded operators in this spirit. It contains the mentioned special case of Bolis' result when $T$ is doubly power-bounded. For stating the result we first need some preparations,  explaining an analogue of the decomposition above used by Ruess and Summers. Let $E$ be a Banach space and let $T\in \LLL(E)$, which is from now on always assumed to be power-bounded. A vector $x\in E$ is called \emph{asymptotically almost periodic} (a.a.p.~for short) with respect to $T$ if the (forward) orbit
$$
\bigl\{T^n x:n\in \NN\bigr\}\subseteq E
$$
is relatively compact. Denote by $E_{\aap}$ the collection of a.a.p.~vectors, which is a closed $T$-invariant subspace of $E$. We shall need the following form of the Jacobs--de Leeuw--Glicksberg decomposition for operators with relatively compact (forward) orbits; see \cite[Chapter 16]{EFHN}, or \cite[Thm~V.2.14]{EN} where the proof is explained for continuously parametrized semigroups instead of semigroups of the form $\{T^n:n\in \NN\}$ (the proof is nevertheless the same).

\begin{theorem}[Jacobs--de Leeuw--Glicksberg]
Let $E$ be a Banach space and let $T\in \LLL(E)$ have relatively compact orbits ($T$ is hence power-bounded). Then there is a projection $P\in \LLL(E)$ commuting with $T$ such that
\begin{align*}
E_\rv:=\rg P&=\bigl\{x\in E: Tx=\lambda x\mbox{ for some } \lambda\in \CC,\:|\lambda|=1\bigr\},\\
E_\st:=\rg(I-P)=\ker P&=\bigl\{x\in E:T^nx\to 0\mbox{ for $n\to\infty$}\bigr\}.
\end{align*}
The restriction of $T$ to $E_\rv$ is a doubly power-bounded operator.
\end{theorem}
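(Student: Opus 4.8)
The statement is classical --- see \cite[Chapter~16]{EFHN} or \cite[Thm~V.2.14]{EN} --- but let me indicate the argument I would follow. The plan is to place $T$ inside a compact commutative semigroup of operators and to extract $P$ from its ideal structure. Put $M:=\sup_{n\in\NN}\|T^n\|<\infty$ and let $\mathcal{S}$ be the closure of $\{T^n:n\in\NN\}$ in the strong operator topology of $\LLL(E)$. Embedding $\LLL(E)$ into $\prod_{x\in E}E$ by $S\mapsto(Sx)_{x\in E}$ and using that each orbit $\overline{\{T^nx:n\in\NN\}}$ is norm-compact, Tychonoff's theorem shows that $\mathcal{S}$ is compact in the strong operator topology; every $S\in\mathcal{S}$ is then a linear operator with $\|S\|\le M$, multiplication on $\mathcal{S}$ is separately strongly continuous, and $\mathcal{S}$ is commutative (the set $\{R\in\mathcal{S}:RS=SR\}$ is strongly closed and contains all $T^n$, first for $S=T^k$ and then, by the same argument, for every $S\in\mathcal{S}$). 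Thus $\mathcal{S}$ is a commutative compact semitopological semigroup containing $T$.

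Next I would invoke the structure theory of compact (semitopological) semigroups: $\mathcal{S}$ has a smallest two-sided ideal $\mathcal{K}$, which by commutativity is a compact abelian group; I let $P$ be its identity element. Then $P^2=P$, so $P$ is a bounded projection with $\|P\|\le M$, and $P$ commutes with $T$ because $T,P\in\mathcal{S}$. Putting $E_\rv:=\rg P$ and $E_\st:=\ker P=\rg(\id-P)$ gives closed, $T$-invariant subspaces with $E=E_\rv\oplus E_\st$. A short computation then identifies $\mathcal{K}$ with $\mathcal{S}P$, shows that every element of $\mathcal{K}$ maps $E$ into $E_\rv$, and shows that the restriction of $\mathcal{K}$ to $E_\rv$ is a compact abelian group of operators of norm $\le M$ with identity $\id_{E_\rv}$, containing $T|_{E_\rv}=(TP)|_{E_\rv}$.

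It then remains to identify $E_\st$ and $E_\rv$ and to check double power-boundedness. If $x\in\ker P$, then writing $P$ as the strong limit of a net $(T^{n_\alpha})$ gives $0=Px\in\overline{\{T^nx:n\in\NN\}}$; choosing $N$ with $\|T^Nx\|<\veps$ yields $\|T^{N+k}x\|=\|T^k(T^Nx)\|\le M\veps$ for every $k\in\NN$, so $T^nx\to0$; conversely $T^nx\to0$ forces $Px=0$. Hence $E_\st=\{x\in E:T^nx\to0\}$. For $E_\rv$: if $Tx=\lambda x$ with $\lambda\in\CC$, $|\lambda|=1$ and $x\ne0$, then $\overline{\{T^nx:n\in\NN\}}\subseteq\{\mu x:|\mu|=1\}$, and since $Px$ lies in this orbit closure we get $Px=\mu_0x$ with $|\mu_0|=1$; now $P^2=P$ forces $\mu_0^2=\mu_0$, so $\mu_0=1$ and $x=Px\in\rg P$. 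Thus $\rg P$ contains $\clin\{x\in E:Tx=\lambda x\text{ for some }\lambda\in\CC,\ |\lambda|=1\}$. The reverse inclusion is the Peter--Weyl-type fact that the compact abelian group $\mathcal{K}|_{E_\rv}$, acting strongly continuously on $E_\rv$, has joint eigenvectors with dense linear span; each such eigenvector $y$ satisfies $Ry=\chi(R)y$ for a continuous character $\chi$ of $\mathcal{K}$, whence $Ty=\chi(TP)y$ with $|\chi(TP)|=1$. This gives the stated description of $E_\rv$. Finally, $T|_{E_\rv}$ is an element of the compact group $\mathcal{K}|_{E_\rv}$, hence invertible on $E_\rv$ with inverse in the same group; as every element of that group has norm $\le M$ and the group is closed under positive and negative powers, $\sup_{n\in\ZZ}\|(T|_{E_\rv})^n\|\le M$, i.e.\ $T|_{E_\rv}$ is doubly power-bounded.

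The genuinely nontrivial inputs are the structure theorem for compact semitopological semigroups --- existence of the minimal ideal $\mathcal{K}$ and the fact that it is a group in the commutative case --- and the Peter--Weyl-type totality of unimodular eigenvectors in $E_\rv$; both are classical and may be quoted verbatim from \cite[Chapter~16]{EFHN} and \cite[Thm~V.2.14]{EN}, so the rest reduces to the routine verifications above. I expect the reverse inclusion $\rg P\subseteq\clin\{x:Tx=\lambda x,\ |\lambda|=1\}$ to be the delicate step, as it is exactly where the representation theory of the compact group $\mathcal{K}$ enters.
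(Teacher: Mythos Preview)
The paper does not give its own proof of this theorem; it simply quotes the result from the literature, referring to \cite[Chapter~16]{EFHN} and \cite[Thm~V.2.14]{EN} and remarking that the proof there (written for $C_0$-semigroups) carries over unchanged to the discrete semigroup $\{T^n:n\in\NN\}$. Your sketch is correct and follows exactly the classical route taken in those references: form the strongly compact commutative semitopological semigroup $\mathcal{S}=\overline{\{T^n:n\in\NN\}}^{\,\mathrm{SOT}}$, take $P$ to be the unit of its minimal ideal $\mathcal{K}$ (a compact abelian group by Ellis' theorem), identify $\ker P$ with the stable vectors and $\rg P$ with the closed span of unimodular eigenvectors via the representation theory of $\mathcal{K}$, and read off the double power-boundedness of $T|_{E_\rv}$ from membership in the compact group $\mathcal{K}|_{E_\rv}$. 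There is nothing to compare --- you have supplied precisely the argument the paper defers to the cited sources. (One cosmetic remark: the displayed description of $E_\rv$ in the statement should of course be read as the \emph{closed linear span} of the unimodular eigenvectors, as you correctly do; the set of eigenvectors itself is not a linear subspace.)
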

Note that the occurring subspaces and the projection depend on the linear operator $T$  and---for the sake of better legibility, we chose not to reflect this dependence in notation. Now, we can apply this decomposition to a  given power-bounded $T\in \LLL(E)$, or more precisely to the restriction of $T$ to $E_\aap$. We therefore obtain a decomposition
$$
E_{\aap}=\rg P\oplus \ker P=E_\rv\oplus E_\st,
$$
note that $E_\rv,E_\st\subseteq E_{\aap}$; $E_\st$ is called the stable while $E_\rv$ the reversible subspace.
On $\rg P$ the restriction of $T$ is a doubly power-bounded, and
$$
\bigl\{T^n|_{E_\rv}:n\in \ZZ\bigr\}
$$
is a strongly compact group of operators.
\begin{remark}\label{rem:dbly}
Now suppose that $T\in \LLL(E)$ is even doubly power-bounded Then in the above decomposition $E_\st=\{0\}$ must hold. So if $x\in E_\aap$, then even the backward orbit is relatively compact, i.e.
$$
\bigl\{T^nx:n\in \ZZ\bigr\} \quad\mbox{is relatively compact}.
$$
A vector $x\in E_\rv$ is also called \emph{almost periodic}.
\end{remark}
Lemma \ref{lem:A1Am} tells that for $n,m\in \NN$ with $m\geq n$ we have  $(T^m-T^n)x\in E_\aap$ whenever $(T-I)x\in E_\aap$.

\medskip\noindent We are interested in  whether $x\in E_\aap$ if $(T-I)x\in E_\aap$. The answer would be trivially ``yes'', if we knew that $T^nx-x$ actually converges as $n\to\infty$. Here is a slightly more complicated view on trivial fact:
\begin{remark}
\begin{enumerate}[a)]
\item Suppose we know $x\in E_\aap$. Then we can apply $I-P$ to $x$ and obtain
\begin{equation*}
(I-P)(T^n-I)x=(I-P)(T^nx-x)=T^n(I-P)x-(I-P)x\to (P-I)x
\end{equation*}
for $n\to\infty$.
Hence if $x\in E_\aap$, then $(I-P)(T^n-I)x$ must be convergent.
\item
Suppose that $(I-P)(T^n-I)x$ converges for $n\to\infty$ (note again that $T^nx-x\in E_\aap$, so we can apply the projection $I-P$ to it). If $(T-I)x\in E_\aap$ belongs even to the stable part, then $(T^n-I)x\in E_\st$, so $(I-P)(T^n-I)x=(T^n-I)x$. Hence $T^nx-x$ converges as $n\to \infty$ by assumption, implying $x\in E_\aap$.
\end{enumerate}
\end{remark}

\noindent It remains to study the case when $(T-I)x\in E_\rv=\rg P$. The next is  a preparatory lemma.
\begin{lemma}\label{lem:Pbig}
Suppose $x$ is not an a.a.p.~vector but $(T-I)x\in E_\aap$. Furthermore, suppose that $(I-P)(T^nx-x)$ converges. Then there is a $\delta>0$ and a subsequence $(n_k)$ of $\NN$ such that
$$
\|P(T^{n_k}x-T^{n_\ell}x)\|\geq \delta\quad\mbox{for all $k,\ell\in\NN$, $k\neq\ell$.}
$$
\end{lemma}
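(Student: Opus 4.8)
The plan is to push the failure of relative compactness into the reversible part $\rg P$ and then extract a $\delta$-separated sequence there by the usual characterisation of total boundedness.

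First I would check that the projection $P$ may legitimately be applied to each vector $T^nx-x$, even though $x$ itself need not lie in $E_\aap$: since $(T-I)x\in E_\aap$ and $E_\aap$ is a closed $T$-invariant subspace, we have $T^nx-x=(I+T+\cdots+T^{n-1})(T-I)x\in E_\aap$ (this is the fact recorded via Lemma~\ref{lem:A1Am} just before the statement). On the other hand, since $x\notin E_\aap$, the orbit $\{T^nx:n\in\NN\}$ is not relatively compact, and hence neither is its translate $\{T^nx-x:n\in\NN\}$.

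Next I would use the splitting $T^nx-x=P(T^nx-x)+(I-P)(T^nx-x)$. By hypothesis $(I-P)(T^nx-x)$ converges, so $\{(I-P)(T^nx-x):n\in\NN\}$ is relatively compact, being a convergent sequence together with its limit. If $\{P(T^nx-x):n\in\NN\}$ were relatively compact as well, then $\{T^nx-x:n\in\NN\}$ would be contained in the sum of two relatively compact sets, hence relatively compact, contradicting the previous paragraph. Therefore $\{P(T^nx-x):n\in\NN\}$ is not relatively compact.

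Finally, since $E$ is complete, a subset is relatively compact iff it is totally bounded, so there is a $\delta>0$ for which $\{P(T^nx-x):n\in\NN\}$ admits no finite cover by balls of radius $\delta$; a greedy selection then produces indices $n_1,n_2,\dots$ with $\|P(T^{n_k}x-x)-P(T^{n_\ell}x-x)\|\geq\delta$ for all $k\neq\ell$. In particular these values are pairwise distinct, so the indices $n_k$ are distinct and, after relabelling, may be assumed strictly increasing, giving a genuine subsequence $(n_k)$ of $\NN$. Linearity of $P$ yields $P(T^{n_k}x-x)-P(T^{n_\ell}x-x)=P(T^{n_k}x-T^{n_\ell}x)$, which is the asserted estimate. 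I do not expect a real obstacle here: the only point demanding care is that $P$ is defined only on $E_\aap$, so one must keep the differences $T^nx-x$ (rather than the individual vectors $T^nx$ and $x$) inside the domain of $P$; the passage from non-compactness to a $\delta$-separated sequence is entirely routine.
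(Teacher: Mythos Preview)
Your proof is correct and follows essentially the same route as the paper's: both exploit the splitting $T^nx-x = P(T^nx-x) + (I-P)(T^nx-x)$ together with the convergence of the stable part to force the reversible part to be $\delta$-separated along a subsequence. The only cosmetic difference is the order of extraction---the paper first picks a $2\delta$-separated sequence from the full orbit and then subtracts the small $(I-P)$-part via the triangle inequality, whereas you first argue that $\{P(T^nx-x)\}$ fails to be relatively compact and then extract the $\delta$-separated sequence directly from that set.
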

\begin{proof}
By the non-a.a.p.~assumption there is a subsequence $(n_k)$ of $\NN$ and a $\delta>0$ such that $\|T^{n_k}x-T^{n_\ell}x\|>2\delta$ for $\ell\neq k$. By the other assumption, however,
$(I-P)(T^{n_k}x-x)$ is a Cauchy-sequence so when leaving out finitely many members, we can pass to a subsequence with $\|(I-P)(T^{n_k}x-T^{n_\ell}x)\|<\delta$ for all $k,\ell\in \NN$, $\ell,k\geq k_0$. The assertion follows from this.
\end{proof}
Note that in the situation of this lemma we necessarily have $\|P\|>0$.

\begin{lemma}\label{lem:multap}
Let $E$ be a Banach space,  let $T\in \LLL(E)$ be power-bounded, and let $x_1,\dots x_m\in E$ be a.a.p.~vectors.
For every sequence $(n_k)\subseteq\NN$  there is a subsequence $(n'_k)$ with $n'_k-n'_{k-1}\to \infty$ and
$$\|T^{n'_{k}}x_i-T^{n'_{k-1}}x_i\|\to 0\quad \mbox{ for all $i=1,\dots, m$ as $k\to\infty$}.
$$
\end{lemma}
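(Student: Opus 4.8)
The goal is to find a single subsequence along which all $m$ orbits have "almost constant consecutive differences" with large gaps. I would reduce this to a diagonal/compactness argument. Since each $x_i$ is a.a.p., the set $K := \bigl\{(T^nx_1,\dots,T^nx_m):n\in\NN\bigr\}$ is relatively compact in $E^m$ (a finite product of relatively compact sets is relatively compact). Given an arbitrary sequence $(n_k)\subseteq\NN$, I would first pass to a subsequence so that the vectors $\bigl(T^{n_k}x_1,\dots,T^{n_k}x_m\bigr)$ converge in $E^m$, say to $(y_1,\dots,y_m)$; in particular each $T^{n_k}x_i$ is Cauchy.

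**Extracting large gaps.** The convergence alone does not yet give $n'_k-n'_{k-1}\to\infty$. To arrange that, I would inductively thin out the subsequence: having chosen $n'_1<\dots<n'_{k-1}$, use that the original subsequence is infinite to pick $n'_k$ from it with $n'_k > n'_{k-1}+k$ and, simultaneously, $\|T^{n'_k}x_i-y_i\|<2^{-k}$ for all $i=1,\dots,m$ (possible since $T^{n_k}x_i\to y_i$). Then $n'_k-n'_{k-1}\to\infty$ by construction, and for each $i$,
\[
\|T^{n'_k}x_i-T^{n'_{k-1}}x_i\|\le \|T^{n'_k}x_i-y_i\|+\|y_i-T^{n'_{k-1}}x_i\|\le 2^{-k}+2^{-(k-1)}\to 0,
\]
which is exactly the claim. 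So the structure is: (1) relative compactness of the joint orbit, (2) pass to a convergent joint subsequence, (3) a second thinning to force the gaps to diverge while keeping closeness to the common limit.

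**Where the work is.** There is essentially no hard obstacle here; the only mild subtlety is making sure the two requirements in step (3) — namely the gap lower bound $n'_k>n'_{k-1}+k$ and the estimate $\|T^{n'_k}x_i-y_i\|<2^{-k}$ — can be met simultaneously, which holds because a convergent sequence stays within any prescribed $\varepsilon$ of its limit from some index on, and that index-threshold does not conflict with demanding a large index. One should also note that power-boundedness is not even needed for this lemma as stated (relative compactness of the orbits is assumed directly); it is listed presumably for uniformity with the surrounding hypotheses. The result is really a packaging of sequential compactness, and I would present it in the three short steps above.
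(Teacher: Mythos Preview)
Your proof is correct and follows essentially the same route as the paper: the paper passes to the product space $X=E^m$ with the diagonal operator $S(y_i)=(Ty_i)$, observes that $(x_1,\dots,x_m)$ is an a.a.p.\ vector for $S$, and then extracts a Cauchy subsequence with $n'_k-n'_{k-1}\to\infty$---exactly your steps (1)--(3), just phrased more tersely. Your explicit inductive thinning to secure the diverging gaps spells out what the paper leaves as a one-line parenthetical remark.
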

\begin{proof}
Consider the Banach space $X=E^m$ and the diagonal operator $S\in \LLL(X)$ defined by $S(y_i)=(Ty_i)$. This is trivially power-bounded and $(x_i)\in X$ is an a.a.p.~vector. The assertion follows from this, since $(S^{n_k}(x_i))$ has a Cauchy subsequence $(S^{n'_k}(x_i))$ with $n'_k-n'_{k-1}\to \infty$ as  $k\to\infty$.
\end{proof}


We now come to the answer of the initial question.
\begin{theorem}\label{thm:bolisnoco1}
Let $E$ be Banach space which
does not contain an isomorphic copy of $\co$, and let $T\in \LLL(E)$ be a power-bounded operator. If $x\in E$ and $(T-I)x$ is an a.a.p.~vector with $(I-P)(T^n-I)x$ convergent for $n\to \infty$, then  $x$ itself is a.a.p.~vector.
\end{theorem}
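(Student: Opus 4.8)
The plan is to peel off the stable part by means of the Jacobs--de Leeuw--Glicksberg decomposition, reduce to a doubly power-bounded situation, and there invoke Basit's theorem \cite{Bolis71}.

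\emph{Step 1: reduction to the reversible part.} By Lemma~\ref{lem:A1Am} we have $(T^n-I)x\in E_\aap$ for every $n\in\NN$, so $P$ may be applied to these vectors. Splitting
$$
T^nx-x=(T^n-I)x=\sum_{j=0}^{n-1}T^j(T-I)x
$$
through $P$ and $I-P$ yields $T^nx=x+g_n+s_n$, where
\begin{align*}
s_n&:=(I-P)(T^n-I)x=\sum_{j=0}^{n-1}T^j(I-P)(T-I)x,\\
g_n&:=P(T^n-I)x=\sum_{j=0}^{n-1}T^jw,\qquad w:=P(T-I)x\in E_\rv.
\end{align*}
By assumption $(s_n)$ converges, and $(g_n)$ is bounded, since with $M:=\sup_k\|T^k\|$ one has $\|g_n\|\le\|P\|(M+1)\|x\|$. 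As a sum of two relatively compact sequences is relatively compact, it suffices to prove that $\{g_n:n\in\NN\}$ is relatively compact in the closed subspace $E_\rv$.

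\emph{Step 2: almost periodic data on $E_\rv$.} On $E_\rv$ the operator $T$ is doubly power-bounded and $\{T^n|_{E_\rv}:n\in\ZZ\}$ is a strongly compact group. Consequently, for each $y\in E_\rv$ the orbit $n\mapsto T^ny$ is a Bohr almost periodic $E_\rv$-valued function on $\ZZ$: given a sequence $(m_j)$, choose a subsequence along which $T^{m_j}|_{E_\rv}$ converges strongly; by equicontinuity this convergence is uniform on the compact orbit closure of $y$, so the translates $T^{\cdot+m_j}y$ converge in $\ell^\infty(\ZZ,E_\rv)$, which is exactly relative compactness of $\{T^{\cdot+m}y:m\in\ZZ\}$. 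Extend now $(g_n)$ to $\ZZ$ by $\tilde g_{-n}:=-T^{-n}g_n$ for $n\ge1$; then $\tilde g_{m+n}=\tilde g_m+T^m\tilde g_n$ for all $m,n\in\ZZ$, so $\tilde g$ is bounded (using $\sup_k\|T^{-k}|_{E_\rv}\|<\infty$) and every difference function $m\mapsto\tilde g_{m+n}-\tilde g_m=T^m\tilde g_n$ is almost periodic, being the orbit of the vector $\tilde g_n\in E_\rv$.

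\emph{Step 3: conclusion via Basit's theorem.} Thus $\tilde g\colon\ZZ\to E_\rv$ is bounded with all difference functions almost periodic; since $E_\rv\subseteq E$ contains no copy of $\co$, Basit's theorem \cite{Bolis71} yields that $\tilde g$ is almost periodic, hence of relatively compact range. Therefore $\{g_n:n\in\NN\}$, and with it $\{T^nx=x+g_n+s_n:n\in\NN\}$, is relatively compact, i.e.\ $x\in E_\aap$.

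\emph{Expected main obstacle.} Along this route the only delicate points are the upgrade in Step~2 from ``relatively compact orbit'' to ``almost periodic orbit'' (which genuinely uses strong compactness of the group) and the accurate bookkeeping of the reversible and stable parts; all the work involving the absence of $\co$ is absorbed into Basit's theorem. If one wanted a self-contained proof, the difficulty would reappear precisely there: one would have to build a copy of $\co$ inside $E$ out of a bounded, non-relatively-compact primitive $g_n=\sum_{j<n}T^jw$ of an almost periodic sequence, by selecting a lacunary (Diophantine-independent) subsequence of the unimodular Fourier eigenvalues of $w$ accumulating at $1$ and exploiting $\sup_n\|g_n\|<\infty$ together with Lemma~\ref{lem:Pbig} (via the Bessaga--Pe\l czy\'nski $\co$-theorem) --- the discrete operator-theoretic counterpart of Kadets' original estimate.
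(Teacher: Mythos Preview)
Your argument is correct, and the route is genuinely different from the one taken in the paper. You observe that although $P$ cannot be applied to $x$, it can be applied to $(T^n-I)x\in E_\aap$, and the resulting sequence $g_n=P(T^n-I)x=\sum_{j<n}T^jw$ with $w=P(T-I)x$ is an $E_\rv$-valued additive cocycle for the restriction $T|_{E_\rv}$. Since this restriction is doubly power-bounded and generates a strongly compact group, the cocycle extends to all of $\ZZ$, stays bounded, and has Bohr almost periodic difference functions; Basit's theorem for $G=\ZZ$ then finishes the job. In effect you \emph{reduce} Theorem~\ref{thm:bolisnoco1} to the doubly power-bounded case (Corollary~\ref{cor:basit}), whereas the paper proceeds in the opposite direction: it proves Theorem~\ref{thm:bolisnoco1} directly and self-containedly by constructing, from a hypothetical non-a.a.p.~vector $x$, a sequence $x_i=P(T^{k_i}x-x)$ whose partial sums are unconditionally bounded but non-convergent, and then invokes the Bessaga--Pe\l czy\'nski theorem to produce a copy of $\co$. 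Your approach is shorter and conceptually clean because it outsources the $\co$-argument to the literature; the paper's approach buys self-containedness and, as you note in your final paragraph, makes explicit exactly how the copy of $\co$ arises from the operator data. The only technical point worth flagging is your upgrade from ``relatively compact orbit'' to ``Bohr almost periodic orbit function'' in Step~2: this is where strong compactness of $\{T^n|_{E_\rv}:n\in\ZZ\}$ (and not merely double power-boundedness) is actually used, and your justification via uniform convergence on the compact orbit closure is the right one.
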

The proof is by contradiction, i.e., we suppose that there is some $x\in E$ satisfying the assumptions of the theorem but being not asymptotically almost periodic. The contradiction arises then by finding a copy of $\co$ in $E$, for which we shall use the classical result of Bessaga and Pe{\l}czy\'nski \cite{BessPel} in the following form, see also \cite[Thms.~6 and 8]{Di84}.
\begin{theorem}[Bessaga--Pe{\l}czy\'nski]\label{thm:bespelcz} Let $E$ be a Banach space and let $x_n\in E$ be vectors such that the partial sums are unconditionally bounded (i.e., $\sum_{j=1}^N x_{n_j}$ are uniformly  bounded for all subsequences $(n_j)$ of $\NN$) and such that the series $\sum x_i$ is nonconvergent. Then $E$ contains a copy of $\co$.
\end{theorem}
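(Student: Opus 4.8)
The plan is to argue by contradiction: assume $x$ satisfies the hypotheses but is \emph{not} an a.a.p.\ vector, and produce an isomorphic copy of $\co$ inside $E$ via Theorem \ref{thm:bespelcz}. First I would isolate the reversible part. Write $y:=(T-I)x\in E_\aap$ and $y_\rv:=Py\in E_\rv$. Power-boundedness makes $\{T^nx-x:n\in\NN\}$ bounded, and since $T^nx-x=\sum_{j=0}^{n-1}T^jy$ and $P$ commutes with $T$, applying $P$ gives $P(T^nx-x)=\sum_{j=0}^{n-1}T^jy_\rv=:S_n\in E_\rv$, a bounded sequence. By the convergence hypothesis on $(I-P)(T^n-I)x$ and the non-a.a.p.\ assumption, Lemma \ref{lem:Pbig} furnishes $\delta>0$ and a subsequence $(n_k)$ with $\|S_{n_k}-S_{n_\ell}\|\ge\delta$ for $k\ne\ell$. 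Thus $(S_{n_k})$ is bounded but $\delta$-separated, hence has no convergent subsequence; note that passing to any further subsequence preserves this separation.

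Next I would set $z_k:=S_{n_{k+1}}-S_{n_k}=\sum_{j=n_k}^{n_{k+1}-1}T^jy_\rv\in E_\rv$. Separation gives $\|z_k\|\ge\delta$, so the series $\sum_k z_k$ is nonconvergent, while the contiguous partial sums $\sum_{k=1}^N z_k=S_{n_{N+1}}-S_{n_1}$ stay bounded by $2\sup_n\|S_n\|$. To invoke Theorem \ref{thm:bespelcz} it remains to verify that the partial sums are \emph{unconditionally} bounded, i.e.\ $\sup\{\|\sum_{k\in F}z_k\|:F\subseteq\NN\text{ finite}\}<\infty$. This is the heart of the matter, and it is where the almost periodicity carried by $E_\rv$ enters.

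To control arbitrary subsums I would exploit that $\{T^n|_{E_\rv}:n\in\ZZ\}$ is a strongly compact group, so that the almost periods of $y_\rv$ are relatively dense, and I would refine $(n_k)$ by an inductive, diagonal application of Lemma \ref{lem:multap}. Concretely, having chosen $n_1<\dots<n_k$, I apply Lemma \ref{lem:multap} to the finitely many a.a.p.\ vectors $S_{n_1},\dots,S_{n_k},y_\rv$ to select the next gap $g_k:=n_{k+1}-n_k$ so large that $T^{g_k}$ displaces each of these vectors by less than a prescribed $\veps_k$, with $\sum_k\veps_k<\infty$. Given a finite $F$, the idea is then to translate the selected blocks $\{z_k:k\in F\}$ by suitable almost periods so that they line up into a single consecutive block; since shifting $\sum_{j=a}^{b-1}T^jy_\rv$ by a period $p$ changes it by $\sum_{j=a}^{b-1}T^j(T^py_\rv-y_\rv)$, each move costs at most a fixed multiple of the relevant $\veps$, and after alignment $\sum_{k\in F}z_k$ differs from a contiguous sum $S_b-S_a$ by at most a constant times $\sum_k\veps_k$. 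This would bound $\|\sum_{k\in F}z_k\|$ by $2\sup_n\|S_n\|+C\sum_k\veps_k$ uniformly in $F$.

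With unconditional boundedness established and $\sum_k z_k$ nonconvergent, Theorem \ref{thm:bespelcz} yields a copy of $\co$ in $E_\rv\subseteq E$, contradicting the hypothesis on $E$; hence $x$ must be a.a.p. I expect the genuine obstacle to be precisely this unconditional-boundedness step. The naive shortcut of writing $S_n=(T^n-I)u$ for a primitive $u\in E_\rv$ fails---such a $u$ would force $(S_{n_k})$ to be relatively compact, contradicting separation---which is exactly the Bohl--Bohr obstruction, so the almost-period sliding cannot be circumvented. The delicate point is to organize the inductive choice of the $n_k$ via Lemma \ref{lem:multap} robustly enough that the cumulative translation error stays bounded \emph{simultaneously for all finite $F$}; in particular each required shift must be matched to an almost period of comparable length, so that errors do not accumulate across the long gaps $g_k$. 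Securing this uniform control is the part that the argument must handle with care.
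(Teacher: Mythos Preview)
Your proposal does not address the stated Bessaga--Pe{\l}czy\'nski theorem at all (the paper merely cites that result, without proof); it is an attempt at Theorem~\ref{thm:bolisnoco1}, so I compare it with the paper's proof of that theorem.

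The global strategy coincides with the paper's: argue by contradiction, use Lemma~\ref{lem:Pbig} to get a $\delta$-separated sequence in $E_\rv$, then manufacture a sequence satisfying the hypotheses of Theorem~\ref{thm:bespelcz}. The gap is exactly where you locate it, and with your particular inductive choice it is a real one. You set $z_k=\sum_{j=n_k}^{n_{k+1}-1}T^jy_\rv$ and at stage $k$ make $g_k=n_{k+1}-n_k$ an almost period only of the linearly many vectors $S_{n_1},\dots,S_{n_k},y_\rv$. To bound $\sum_{k\in F}z_k$ by ``sliding'' blocks into a contiguous run, the shift required for block $k_j$ is a sum of other $g_i$'s that depends on the arbitrary set $F$, and the cost of moving that block is of order $M\,g_{k_j}\,\|T^{p}y_\rv-y_\rv\|$. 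Since the block lengths $g_{k}$ are unbounded while the almost-period quality of the admissible shifts has been fixed in advance, these errors cannot be controlled uniformly over all finite $F$; and knowing $(T^{g_k}-I)S_{n_i}$ is small for $i\le k$ only relates $z_k$ to $P(T^{g_k}x-x)$, it gives no handle on arbitrary subset sums of the latter.

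The paper avoids the block-length factor entirely by working with $x_i:=P(T^{k_i}x-x)$ and, at stage $m$, applying Lemma~\ref{lem:multap} to the $2^m$ vectors $T^{\Sigma F}x-x$, $F\subseteq\{1,\dots,m\}$, choosing $k_{m+1}$ (as a difference $n_k-n_\ell$ from the separated subsequence) so that $\bigl\|(T^{k_{m+1}}-I)P(T^{\Sigma F}x-x)\bigr\|\le 2^{-m}$ for every such $F$. A telescoping identity then expresses $-\sum_{j=1}^m x_{i_j}$ as $P\bigl(x-T^{k_{i_1}+\cdots+k_{i_m}}x\bigr)$ plus a sum of these controlled differences, giving unconditional boundedness with geometrically summable error; no length factor appears because each $P(T^{\Sigma F}x-x)$ has norm at most $\|P\|(1+M)\|x\|$. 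Replacing your inductive hypothesis by this exponentially larger (but still finite) family of constraints is exactly the missing ingredient in your outline.
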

 The idea of the proof is based on Basit's paper, but it is not a direct modification, since we do not know whether we can apply the projections $P$ to $x$ or to $Tx$.
\begin{proof}[Proof of Theorem \ref{thm:bolisnoco1}]
We argue indirectly. Assume that $x\not\in E_{\aap}$, so by Lemma \ref{lem:Pbig} we can take a subsequence $(n_k)$ and a $\delta>0$ such that $\|P(T^{n_k}x-T^{n_\ell}x)\|>\delta$ for all $k,\ell\in\NN$ with $k\neq \ell$.
Next we construct a sequence that fulfills the conditions of the Bessaga--Pelczynski Theorem \ref{thm:bespelcz}, hence exhibiting a copy of $\co$ in $E$. First of all let $M:=\max(\sup\{\|T^n|_{E_\rv}\|:n\in \ZZ\},\sup\{\|T^n\|:n\in \NN\},\|P\|)$.  Take $k_1\in \NN$ such that $\|P(T^{{k_1}}x-x)\|>\delta/M$ (use Lemma \ref{lem:Pbig}), and suppose that the strictly increasing finite sequence $k_i$, $i=1,\dots,m$ is already chosen. For a subset $F\subset \{1,2,\dots,m\}$ denote by $\Sigma F$ the sum $\sum_{i\in F}{k_i}$ (if $F=\emptyset$, then $\Sigma F=0$). Each of the finitely many vectors $T^{\Sigma F}x-x$ belongs to $E_{\aap}$ by Lemma \ref{lem:A1Am}.
By using Lemma \ref{lem:multap} we find  $k,\ell\in\NN$ with $k-\ell>k_m$ such that
\begin{equation*}
\bigl\|T^{n_{k}}(T^{\Sigma F}x-x)-T^{n_{\ell}}(T^{\Sigma F}x-x)\bigr\|\leq \frac1{M2^m}\quad\mbox{for all $F\subseteq\bigl\{1,\dots,m\bigr\}$}.
\end{equation*}
By setting $k_{m+1}:=n_k-n_{\ell}$  we  obtain
\begin{equation}\label{eq:sigmaest2}
\bigl\|T^{{k_{m+1}}}P(T^{\Sigma F}x-x)-P(T^{\Sigma F}x-x)\bigr\|\leq \frac1{2^m}
\end{equation}
for all $F\subseteq\{1,\dots,m\}$.
We also have
$$
M\bigl\|P(T^{{k_{m+1}}}x-x)\bigr\|\geq \bigl\|T^{n_{\ell}}P(T^{{k_{m+1}}}x-x)\bigr\|=\bigl\|P(T^{n_{k}}x-T^{{n_{\ell}}}x)\bigr\|\geq \delta,
$$
and hence we obtain
$$
\bigl\|P(T^{{k_{m+1}}}x-x)\bigr\| \geq\frac{\delta}{M}.
$$

\medskip
Let $x_i:=P(T^{{k_i}}x-x)$. We claim that the sequence $(x_i)$ fulfills the conditions of Theorem \ref{thm:bespelcz}. Indeed, we have $\|x_i\|\geq \delta/M$ by construction so the series $\sum x_i$ cannot be convergent. For $m\in \NN$  and $1\leq i_1<\ i_2<\cdots<i_m$ we have
\begin{align*}
-\sum_{j=1}^m x_{i_j}
&=\hskip0.8em{T^{{k_{i_m}}}}P\Bigl(T^{{k_{i_1}}+\cdots+{k_{i_{m-1}}}}x-x\Bigr)-P\Bigl(T^{{k_{i_1}}+\cdots+{k_{i_{m-1}}}}x-x\Bigr)\\
&\quad+{T^{{k_{i_{m-1}}}}}P\Bigl(T^{{k_{i_1}}+\cdots+{k_{i_{m-2}}}}x-x\Bigr)-P\Bigl(T^{{k_{i_1}}+\cdots+{k_{i_{m-2}}}}x-x\Bigr)\\
&\quad\hskip0.5em\vdots\\
&\quad+{T^{{k_{i_2}}}}P\Bigl(T^{{k_{i_1}}}x-x\Bigr)-P\Bigl(T^{{k_{i_1}}}-x\Bigr)\\[2ex]
&\quad+P(x-T^{{k_{i_1}}+\cdots+{k_{i_m}}}x).
\end{align*}
By \eqref{eq:sigmaest2} we obtain
$$
\Bigl\|\sum_{j=1}^m x_{i_j}\Bigr\|\leq \sum_{j=2}^{m} \frac{1}{2^{i_j-1}}+M\|x\|+M^2\|x\|\leq M'<+\infty.
$$
It follows that $E$ contains a copy of $\co$, a contradiction.
\end{proof}

If $T$ is doubly power-bounded, then by Remark \ref{rem:dbly} we have $E_\st=\{0\}$, and hence $(I-P)=0$. So we obtain the following special case of Basit's more general result:
\begin{corollary}[Basit]\label{cor:basit}
Let $E$ be Banach space $E$ which does not contain a copy of $\co$, and let $T\in \LLL(E)$ be a doubly power-bounded operator. If $x\in E$ and $(T-I)x$ is an a.a.p.~vector, then so is $x$ itself.
\end{corollary}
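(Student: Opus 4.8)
The plan is to derive the corollary directly from Theorem~\ref{thm:bolisnoco1}, by checking that for a doubly power-bounded operator the extra hypothesis that $(I-P)(T^n-I)x$ be convergent is automatically satisfied. The first step is to recall, as in Remark~\ref{rem:dbly}, that the Jacobs--de Leeuw--Glicksberg decomposition applied to the restriction $T|_{E_\aap}$ gives $E_\aap=E_\rv\oplus E_\st$, and that here $E_\st=\{0\}$: indeed, if $y\in E_\st$ then $T^ny\to 0$, while power-boundedness of $T^{-1}$ gives $\|y\|=\|T^{-n}(T^ny)\|\le\bigl(\sup_{k\in\NN}\|T^{-k}\|\bigr)\|T^ny\|\to 0$, so $y=0$. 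Hence the projection $P$ equals $I$ on $E_\aap$, that is, $I-P=0$ on that subspace.

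The second step is to observe that, since $(T-I)x$ is an a.a.p.~vector, Lemma~\ref{lem:A1Am} applies: for each fixed $n\in\NN$ the forward orbit $\{T^k(T^nx-x):k\in\NN\}=\{T^{k+n}x-T^kx:k\in\NN\}$ is contained in a relatively compact set, so $(T^n-I)x\in E_\aap$. Combining this with the first step, $(I-P)(T^n-I)x=0$ for every $n$, which is a (trivially) convergent sequence. Therefore all hypotheses of Theorem~\ref{thm:bolisnoco1} are met, and that theorem gives $x\in E_\aap$, which is the assertion.

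I do not expect a genuine obstacle here; the single point that needs a moment's care is that the projection $P$ is defined only on $E_\aap$, but this is precisely the subspace containing the vectors $(T^n-I)x$, so the expression $(I-P)(T^n-I)x$ is meaningful and the reduction to Theorem~\ref{thm:bolisnoco1} is legitimate. (Alternatively one could repeat the Bessaga--Pe{\l}czy\'nski construction from the proof of Theorem~\ref{thm:bolisnoco1} directly, now working with the strongly compact group $\{T^n|_{E_\rv}:n\in\ZZ\}$ and with $P=I$; but invoking Theorem~\ref{thm:bolisnoco1} is the shortest path.)
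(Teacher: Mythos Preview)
Your proposal is correct and follows essentially the same route as the paper: the paper simply observes (in the sentence preceding the corollary) that for doubly power-bounded $T$ Remark~\ref{rem:dbly} gives $E_\st=\{0\}$ and hence $I-P=0$, so the convergence hypothesis of Theorem~\ref{thm:bolisnoco1} is vacuous. Your write-up is slightly more detailed---you spell out why $E_\st=\{0\}$ and why $(T^n-I)x\in E_\aap$---but the argument is the same.
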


The above results are certainly not valid for arbitrary Banach spaces.
A counterexample is actually provided by the very same one
showing that the analogue of the Bohl--Bohr theorem fails for arbitrary Banach-valued
functions, see \cite{KadecMI} or \cite[Sec.~2.1]{RuSu2}.
\begin{example}\label{exa:1}
Consider $E=\BUC(\RR;\co)$, and $T$ the shift by $a>0$, and
 $x(t):=(\sin \frac{t}{2^n})_{n\in \NN}$. Then $T\in \LLL(E)$ is doubly power-bounded, and we have
$$
\Bigl|\sin\tfrac{t+h}{2^n}-\sin \tfrac{t}{2^n}\Bigr|=\Bigl|\sin\tfrac{h}{2^{n+1}}\cos\tfrac{2t+h}{2^{n+1}}\Bigr|\leq \varepsilon
$$
for all $n\in\NN$ and $t\in \RR$ if $|h|$ is sufficiently small,
therefore $x\in \BUC(\RR;\co)$. On the other hand $x$ is not an a.a.p.~vector since the set
$$
\bigl\{(\sin\tfrac{t+ma}{2^n})_{n\in \NN}:m\in \NN,t\in \RR\bigr\}\subseteq \co
$$
is not  relatively compact. On the other hand,
 $$y(t):=[(T-I)x](t)=(\sin\tfrac{t+a}{2^n}-\sin\tfrac{t}{2^n})_{n\in \NN}=(\sin\tfrac{a}{2^{n+1}}\cos\tfrac{2t+a}{2^{n+1}})_{n\in
 \NN}$$ is almost periodic, because $y(t)_n\to 0$ uniformly in $t\in \RR$ as $n\to \infty$.
\end{example}

Next we show that on $\co$ itself there is bounded linear operator satisfying the assumptions of  Theorem \ref{thm:bolisnoco1} but for which the conclusion of that theorem fails to hold.
\begin{example}\label{exa:nobolisifco}
It suffices to exhibit an example on $E:=\cc$. Let $(a_n)\in\cc$ be a sequence with $|a_n|=1$ for all $n\in\NN$ and $a_n\neq  b:=\lim_{n\to\infty}a_n$  for all $n\in\NN$. Define
$$
T(x_n):=(a_nx_n).
$$
Then $T\in \LLL(E)$ with $\|T\|=1$. Moreover, $T$ is invertible and doubly power-bounded. Since the standard basis vectors of $\co$ are eigenvectors of $T$ corresponding to unimodular eigenvalues, they all belong to $E_\rv$, and hence $\co\subseteq E_\rv\subseteq E_\aap$. Moreover, since $T$ is doubly power-bounded we have $E_\st=\{0\}$, $I-P=0$, and the condition ``$(\id-P)(T^nx-x)$ converges'' is trivially satisfied for every $x\in E$. Not all vectors are a.a.p.~with respect to $T$. It suffices to prove this for the case when $b=\lim_{n\to\infty}a_n=1$,  otherwise we can pass to the operator $b^{-1}T$, which has precisely the same a.a.p.~vectors as $T$. Now suppose by contradiction that $E=E_\aap$ holds. Then $T$ is mean ergodic on $E$, which is equivalent to the fact that $\ker (T-\id)$ separates $\ker(T'-\id)$, see, e.g., \cite[Ch.~8]{EFHN}. But this is false, as $\dim \ker (T-\id)=0$ and $\dim\ker (T'-\id)\geq 1$. Hence $E\neq E_\aap$ and it also follows that $E_\rv=E_\aap=\co$.

\medskip\noindent Finally, we indeed suppose $b=1$. Then, since $\ran(T-\id)\subseteq \co=E_\aap$, we obtain that $(T-\id)x$ is a.a.p., for every $x\in E$, but not all $x\in E$ belongs to $E_\aap$.
\end{example}

By \cite[Sec.~2.5]{AlbiacKalton} if $\co$ is a closed subspace in a separable Banach space, then it is complemented in there. Thus Example \ref{exa:nobolisifco} in combination with Theorem \ref{thm:bolisnoco1} yields the following:
\begin{corollary}\label{cormain}
For a separable Banach space $E$  the following assertions are equivalent:
\begin{enumerate}[(i)]
\item The Banach space $E$ does no contain a copy of $\co$.
\item For every power-bounded linear operator $T\in \LLL(E)$ and $x\in E$ the orbit
$$
\{T^nx:n\in \NN\}\subseteq E
$$
is relatively compact if and only if
$$
\{T^{n+1}x-T^nx:n\in \NN\}\subseteq E
$$
is relatively compact and $(I-P)(T^nx-x)$ is convergent for $n\to\infty$.
\end{enumerate}
\end{corollary}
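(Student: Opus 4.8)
The plan is to obtain both implications from results already in hand, with essentially no new analytic work. For $(i)\Rightarrow(ii)$ I argue as follows. The ``only if'' direction is elementary: if $\{T^nx:n\in\NN\}$ is relatively compact then $x\in E_\aap$, the set $\{T^{n+1}x-T^nx:n\in\NN\}$ is the image of $\{T^nx:n\in\NN\}$ under the bounded operator $T-I$ and hence relatively compact, and since $P$ commutes with $T$ on $E_\aap$ one has $(I-P)(T^nx-x)=T^n(I-P)x-(I-P)x$ with $(I-P)x\in E_\st$, so this sequence converges to $-(I-P)x$. For the ``if'' direction, note that $T^{n+1}x-T^nx=T^n\bigl((T-I)x\bigr)$, so the relative compactness of the differences is exactly the statement $(T-I)x\in E_\aap$; by Lemma \ref{lem:A1Am} then $(T^n-I)x\in E_\aap$ for all $n$, so $(I-P)(T^nx-x)$ is meaningful, and since in addition it converges and $E$ contains no copy of $\co$, Theorem \ref{thm:bolisnoco1} yields $x\in E_\aap$, i.e.\ $\{T^nx:n\in\NN\}$ is relatively compact.

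For $(ii)\Rightarrow(i)$ I argue by contraposition, using Example \ref{exa:nobolisifco}. Assume $E$ is separable and contains an isomorphic copy of $\co$. Since $\co$ is isomorphic to the space $\cc$ of convergent sequences, $E$ contains a closed subspace $C$ isomorphic to $\cc$, and by \cite[Sec.~2.5]{AlbiacKalton} this copy is complemented, say $E=C\oplus V$ with an associated bounded projection. Transporting the operator of Example \ref{exa:nobolisifco} along an isomorphism $C\cong\cc$ produces a doubly power-bounded $S\in\LLL(C)$ and a vector $y\in C$ with $(S-I)y$ an a.a.p.\ vector of $(C,S)$ but $y$ not an a.a.p.\ vector of $(C,S)$; relative compactness is invariant under isomorphisms, so these properties transfer verbatim. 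Now set $\tilde T:=S\oplus I_V\in\LLL(E)$, which is power-bounded since $\sup_n\|S^n\|<\infty$.

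It then remains to identify the Jacobs--de Leeuw--Glicksberg data of $\tilde T$. An orbit $\{\tilde T^n(u,v):n\in\NN\}=\{(S^nu,v):n\in\NN\}$ is relatively compact if and only if $\{S^nu:n\in\NN\}$ is, so, writing $C_\aap$ for the a.a.p.\ vectors of $(C,S)$, we get $E_\aap=C_\aap\oplus V$ and the restriction of $\tilde T$ to $E_\aap$ is $S|_{C_\aap}\oplus I_V$, which is doubly power-bounded; by Remark \ref{rem:dbly} this forces $E_\st=\{0\}$, hence $I-P=0$ and the condition ``$(I-P)(\tilde T^nz-z)$ converges'' holds trivially for every $z\in E$. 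Take $z:=(y,0)$. On the one hand $(\tilde T-I)z=\bigl((S-I)y,0\bigr)\in C_\aap\oplus V=E_\aap$, so $\{\tilde T^{n+1}z-\tilde T^nz:n\in\NN\}=\{\tilde T^n\bigl((\tilde T-I)z\bigr):n\in\NN\}$ is relatively compact and $(I-P)(\tilde T^nz-z)=0$ is convergent. On the other hand $\{\tilde T^nz:n\in\NN\}=\{(S^ny,0):n\in\NN\}$ is not relatively compact, because $\{S^ny:n\in\NN\}$ is not. Thus the pair $(\tilde T,z)$ violates the equivalence in $(ii)$, which is the desired contrapositive.

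Since Theorem \ref{thm:bolisnoco1} and Example \ref{exa:nobolisifco} already carry the analytic content, the only points that need genuine care are organisational: checking that the Jacobs--de Leeuw--Glicksberg projection and the convergence hypothesis for the block operator $S\oplus I_V$ are the obvious ones, and correctly combining the isomorphism $\cc\cong\co$ with Sobczyk's complementation theorem so that the $\cc$-counterexample can be planted as a complemented summand of the abstract separable space $E$. I do not expect an obstacle beyond this bookkeeping.
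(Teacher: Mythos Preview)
Your proposal is correct and follows exactly the route the paper takes: the implication $(i)\Rightarrow(ii)$ is read off from Theorem~\ref{thm:bolisnoco1} together with the elementary observations in the Remark preceding Lemma~\ref{lem:Pbig}, while $(ii)\Rightarrow(i)$ comes from planting the $\cc$-operator of Example~\ref{exa:nobolisifco} on a complemented copy (via Sobczyk's theorem, \cite[Sec.~2.5]{AlbiacKalton}) and extending by the identity. The paper compresses all of this into a single sentence before the corollary; you have simply written out the bookkeeping that the paper leaves to the reader, and your verification that $E_\st=\{0\}$ for the block operator (hence $I-P=0$) is exactly the point that makes the argument go through.
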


%

We close this paper by two consequences of the previous results, interesting in their own right:
 \begin{corollary}\label{cor:A1Am}
 Let $E$ be Banach space not containing $\co$. Then for every  $x\in E$, $T\in \LLL(E)$ doubly power-bounded operator and $m\in \NN$, $m\geq 1$, the relative compactness of the two sets
 $$
 D_1:=\bigl\{T^{n+1}x-T^nx:n\in \NN\bigr\}\subseteq E
 $$
and
  $$
 D_m :=\bigl\{T^{n+m}x-T^n x:n\in \NN\bigr\}\subseteq E
 $$
 are equivalent.
\end{corollary}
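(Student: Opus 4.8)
The plan is to treat the two implications of the equivalence separately. That relative compactness of $D_1$ implies that of $D_m$ is exactly Lemma \ref{lem:A1Am} (and uses neither the hypothesis on $E$ nor the invertibility of $T$), so only the converse requires an argument. Assume therefore that $D_m$ is relatively compact. Setting $w:=(T^m-I)x$ we have $T^{n+m}x-T^nx=T^nw$, so $D_m=\{T^nw:n\in\NN\}$ is precisely the forward $T$-orbit of $w$; relative compactness of $D_m$ thus means exactly that $w$ is an a.a.p.~vector with respect to $T$.

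The idea is now to apply Basit's result, Corollary \ref{cor:basit}, not to $T$ but to the operator $S:=T^m$. First, $S$ is again doubly power-bounded, since $\{S^n:n\in\NN\}\subseteq\{T^k:k\in\NN\}$ and $\{S^{-n}:n\in\NN\}=\{(T^{-1})^{mn}:n\in\NN\}\subseteq\{T^{-k}:k\in\NN\}$ are subfamilies of uniformly bounded families. Second, a vector is a.a.p.~with respect to $T$ if and only if it is so with respect to $S$: the non-trivial inclusion follows from
$$
\{T^ky:k\in\NN\}\subseteq\bigcup_{j=0}^{m-1}T^j\bigl(\overline{\{S^ny:n\in\NN\}}\bigr),
$$
a finite union of compact sets whenever the $S$-orbit of $y$ is relatively compact, while the converse is obvious. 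Since $w=(S-I)x$ is a.a.p.~with respect to $S$ and $E$ contains no copy of $\co$, Corollary \ref{cor:basit} yields that $x$ itself is a.a.p.~with respect to $S$, hence with respect to $T$.

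Finally, once $\{T^nx:n\in\NN\}$ is relatively compact, so is $D_1$: with $K:=\overline{\{T^nx:n\in\NN\}}$ (compact) one has $D_1\subseteq\{a-b:a,b\in K\}$, and the latter set is compact, being the image of $K\times K$ under the continuous map $(a,b)\mapsto a-b$. As for difficulty: there is no real obstacle beyond invoking Corollary \ref{cor:basit}, whose proof (through Theorem \ref{thm:bolisnoco1} and the Bessaga--Pe{\l}czy\'nski Theorem \ref{thm:bespelcz}) is the substantial part of the paper; the only point needing a line of argument is the coincidence of the a.a.p.~vectors of $T$ and of $T^m$, and the decisive observation is simply to run Basit's theorem on the power $T^m$ rather than on $T$.
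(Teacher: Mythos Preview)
Your proof is correct and follows essentially the same approach as the paper: apply Corollary \ref{cor:basit} to the doubly power-bounded operator $S:=T^m$, and then pass from the relative compactness of the $S$-orbit of $x$ to that of the $T$-orbit by writing the latter as a finite union of continuous images of the former. The only cosmetic difference is that the paper reaches the hypothesis of Corollary \ref{cor:basit} by directly noting that $\{S^{n+1}x-S^nx:n\in\NN\}\subseteq D_m$, whereas you first observe that $D_m$ is the full $T$-orbit of $(S-I)x$ and then invoke the equivalence of a.a.p.\ for $T$ and $S$; both routes are equally short.
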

\begin{proof}
If $D_m$ is relatively compact, then so is $\{T^{nm+m}x-T^{nm}x:n\in \NN\}$ and by Corollary \ref{cor:basit} even $\{T^{nm}x:n\in \NN\}$. By the continuity of $T$ the set $B_k:=\{T^{nm+k}x:n\in \NN\}$ is relatively compact for all $k=0,\dots, m-1$. Since
$$
\bigl\{T^{n}x:n\in \NN\bigr\}=B_1\cup B_2\cup\dots\cup B_{m-1},
$$
the relative compactness of the (forward) orbit of $x$ follows. But this implies the relative compactness of $D_1$.
That the relative compactness of $D_1$ implies that of $D_m$, is true without any assumption on the Banach space $E$, see Lemma \ref{lem:A1Am}.
\end{proof}

\begin{example}\label{exa:A1Am}
Let $E:=\cc$ and for $m\in\NN$, $m\geq 2$ fixed let $T$ be as in Example \ref{exa:1} with $\lim_{n\to\infty}a_n=b\in \CC$ an $m^{\text{th}}$ root of unity. Then we have $E_\aap=E_\rv=\co$. Since  $\rg{(T^{m}-I)}\subseteq \co$ and $\rg{(T-I)}\subseteq \co+(b-1)\one$ ($\one$ is the constant $1$ sequence), we obtain that for this doubly power-bounded operator $T$ and for every $x\in E$ the set $D_m$ as in Corollary \ref{cor:A1Am} is compact, while $D_1$ is not.
\end{example}

Similarly as for Corollary \ref{cormain}, we obtain from Corollary \ref{cor:A1Am} and Example \ref{exa:A1Am} the next characterization.
\begin{corollary}
A separable Banach space $E$ does not contain a copy of $\co$ if and only if  for every  $x\in E$, $T\in \LLL(E)$ doubly power-bounded operator and $m\in \NN$ the compactness of the two sets
 $$
\bigl\{T^{n+1}x-T^nx:n\in \NN\bigr\}
 $$
and
  $$
\bigl\{T^{n+m}x-T^n x:n\in \NN\bigr\}
 $$
 are equivalent.
\end{corollary}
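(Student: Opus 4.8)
The plan is to deduce this characterisation from Corollary \ref{cor:A1Am} and Example \ref{exa:A1Am} in exactly the way Corollary \ref{cormain} was deduced from Theorem \ref{thm:bolisnoco1} and Example \ref{exa:nobolisifco}. One implication is immediate: if $E$ contains no copy of $\co$, then by Corollary \ref{cor:A1Am} the relative compactness of $\{T^{n+1}x-T^nx:n\in\NN\}$ and that of $\{T^{n+m}x-T^nx:n\in\NN\}$ are equivalent for every doubly power-bounded $T\in\LLL(E)$, every $x\in E$ and every $m\geq 1$. For the converse I would argue by contraposition: assuming that $E$ contains a copy of $\co$, I shall construct a doubly power-bounded operator $T\in\LLL(E)$, a vector $x\in E$ and an integer $m\geq 2$ for which $\{T^{n+m}x-T^nx:n\in\NN\}$ is relatively compact but $\{T^{n+1}x-T^nx:n\in\NN\}$ is not, contradicting the asserted equivalence.

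To build such a $T$ I would carry Example \ref{exa:A1Am} over to $E$. Since $E$ is separable, the copy of $\co$ is complemented in $E$ by \cite[Sec.~2.5]{AlbiacKalton}; because $\cc$ is isomorphic to $\co$, this provides a closed subspace $C\subseteq E$ isomorphic to $\cc$, a closed complement $Y$ with $E=C\oplus Y$, and an isomorphism $\Phi\colon\cc\to C$. Fixing $m\geq 2$ and letting $T_0\in\LLL(\cc)$ and $x_0\in\cc$ be the doubly power-bounded operator and the vector furnished by Example \ref{exa:A1Am}, I would set $T:=(\Phi T_0\Phi^{-1})\oplus I_Y$ and $x:=\Phi x_0\in C$. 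Then $T$ is doubly power-bounded, since $\Phi T_0\Phi^{-1}$ is similar to the doubly power-bounded operator $T_0$, and taking the direct sum with the identity on $Y$ preserves double power-boundedness. Moreover $C$ is $T$-invariant with $T|_C=\Phi T_0\Phi^{-1}$, so that $T^nx=\Phi(T_0^nx_0)$ for every $n$; hence $\{T^{n+1}x-T^nx:n\in\NN\}$ and $\{T^{n+m}x-T^nx:n\in\NN\}$ are the images under the linear homeomorphism $\Phi$ of the corresponding difference sets of $(T_0,x_0)$, so relative compactness passes back and forth between the two, and the desired behaviour follows from Example \ref{exa:A1Am}.

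I do not expect a serious obstacle; the argument is a line-by-line analogue of the proof of Corollary \ref{cormain}. The step that genuinely needs care --- and the only ingredient not already in hand --- is this transfer: one must make sure that the counterexample, which lives on $\cc$, can be realised inside an arbitrary separable $E\supseteq\co$ with both relevant features intact, namely that the ambient operator stays doubly power-bounded and that one of the two difference sets remains relatively compact while the other does not. This is exactly what we obtain from the copy of $\co$ being complemented in $E$ (a consequence of separability) together with the isomorphism $\cc\cong\co$.
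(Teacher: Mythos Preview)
Your proposal is correct and follows exactly the approach indicated by the paper: one direction is Corollary~\ref{cor:A1Am}, and for the other you use that $\co$ is complemented in any separable superspace (Sobczyk, \cite[Sec.~2.5]{AlbiacKalton}) together with $\cc\cong\co$ to transplant the counterexample of Example~\ref{exa:A1Am} into $E$ via a direct-sum extension by the identity. This is precisely the argument the paper has in mind when it writes ``similarly as for Corollary~\ref{cormain}''; your verification that double power-boundedness and the (non-)relative compactness of the two difference sets survive the transfer via the isomorphism $\Phi$ fills in the routine details.
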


\def\cprime{$'$}
\providecommand{\bysame}{\leavevmode\hbox to3em{\hrulefill}\thinspace}
\providecommand{\MR}{\relax\ifhmode\unskip\space\fi MR }
\providecommand{\MRhref}[2]{%
  \href{http://www.ams.org/mathscinet-getitem?mr=#1}{#2}
}
\providecommand{\href}[2]{#2}

\parindent0pt
\end{document}